\newtheorem{theorem}{Theorem}[section]
\newaliascnt{lemma}{theorem}
\newtheorem{lemma}[lemma]{Lemma}
\newtheorem{proposition}[theorem]{Proposition}
\newaliascnt{corollary}{theorem}
\newtheorem{corollary}[corollary]{Corollary}
\newtheorem{definition}[theorem]{Definition}
\newcommand{\be}{\begin{equation}}
\newcommand{\ee}{\end{equation}}
\newcommand{\bes}{\begin{equation*}}
\newcommand{\ees}{\end{equation*}}
\newcommand{\ba}{\begin{eqnarray}}
\newcommand{\ea}{\end{eqnarray}}
\newcommand{\bas}{\begin{eqnarray*}}
\newcommand{\eas}{\end{eqnarray*}}
\numberwithin{equation}{section}
\def\int{{\rm int}}
\title{The Alon--Tarsi Number of A Toroidal Grid }
\author{ Zhiguo Li$^1$\thanks{Corresponding author. E-mail: zhiguolee@hebut.edu.cn},~
 Zeling Shao$^1$, Fedor Petrov$^2$, Alexey Gordeev$^3$\\
{\small 1. School of Science, Hebei University of Technology, Tianjin 300401, China}
\footnote{This work is supported by  the Natural Science Foundation of Hebei Province,China (No. A2019402043). }\\
{\small ~2. Saint Petersburg Department of the Steklov Mathematical Institute, St. Petersburg, Russia}\\
{\small ~3. The Euler International Mathematical Institute, St. Petersburg, Russia}
\date{}
}
\begin{document}

\maketitle

\begin{abstract}

The \emph{Alon--Tarsi number} $AT(G)$ of a graph $G$  is the smallest
$k$ for which there is an orientation  $D$  of $G$ with max indegree $k-1$ such that the
number of even and odd circulations contained in D are different.
  In this paper, we show that the Alon--Tarsi number of toroidal grids $T_{m,n}=C_m\Box C_n$ equals $4$  when $m,n$ are both odd and $3$ otherwise.

 \vskip 12pt
  \noindent {\bf Key words.} {Alon--Tarsi number, List chromatic number,  Combinatorial Nullstellensatz, Toroidal grid}

\vskip 12pt
  \noindent {\bf 2010 MR subject classifications. }{ 05C15, 05C31 } 

\end{abstract}

\section{Introduction}

In this paper all graphs are finite, and all graphs are either simple graphs or simple
directed graphs. List coloring is a widely studied generalization of the classical notion of graph coloring.
List colorings of graphs, introduced independently by Vizing \cite{vizing1976} and  by
Erdos, Rubin and Taylor\cite{Erdos1979}, is usually known as the study of the choosability properties of a graph. Two excellent surveys on them are those by Alon \cite{alon1993} and Tuza \cite{tuza1997}, and the second of these has been updated by Kratochvil, Tuza and Voigt \cite{tuza97}.

Consider an arbitrary field $\mathbb{F}$ and a function $L$, which assigns a finite subset $L(v)\subseteq\mathbb{F}$ to each vertex $v$ of a graph $G$. An $L$-coloring of $G$ is a vertex coloring
$\phi$ that colors each vertex $v$ by a color $\phi(v)\in L(v)$ so that no adjacent vertices receive
the same color. A graph $G$ is $L$-colorable if it admits an $L$-coloring, and $k$-choosable
(or $k$-list-colorable) if it is $L$-colorable for every assignment of $k$-element lists to the
vertices. An $L$-coloring $\phi$ for $G$ with every vertex $v$ satisfying $|L(v)|=k$ is also called
a $k$-list-coloring. The \emph{list chromatic number }(or the choice number) $\chi_l(G)$ of a graph
$G$ is the smallest $k$ for which G is $k$-choosable.

In a seminal paper \cite{alon1993}, Alon and Tarsi have
introduced an algebraic technique for proving upper bounds on the
list chromatic number of graphs (and thus, in particular, upper bounds on
their chromatic number). The upper bound on the list chromatic number
of $G$ obtained via their method was later called the Alon--Tarsi
number of G and was denoted by $AT(G)$ (see e.g. Jensen and Toft
(1995) \cite{jensen2011}). They have provided a combinatorial interpretation
of this parameter in terms of the Eulerian subdigraphs of an
appropriate orientation of $G$. Their characterization can be restated
as follows.  The \emph{Alon--Tarsi number} of $G$, $AT(G)$, is the smallest
$k$ for which there is an orientation  $D$  of $G$ with max indegree $k-1$ such that the
number of even and odd circulations contained in D are different. There is an equivalent definition of Alon--Tarsi number by Alon--Tarsi polynomial method (see section 2). It follows from the Alon--Tarsi Theorem \cite{alon1993} that  $\chi(G)\leq \chi_l(G)\leq AT(G)$.

Let $G$ and $H$ be graphs. The \emph{Cartesian product} $G\Box H$ of $G$ and $H$ is the graph
with vertex set $V(G)\times V (H)$ where two vertices $(u_1, v_1)$ and $(u_2, v_2)$ are adjacent
if and only if either $u_1 = u_2$ and $v_1v_2\in E(H)$, or $v_1 = v_2$ and $u_1u_2 \in E(G)$.
Let $P_n$ and $C_n$ denote respectively the path and the cycle on $n$ vertices. We
will denote by $G_{m,n} = P_m\Box P_n$ the \emph{grid} with $m$ rows and $n$ columns and by
$T_{m,n} = C_m\Box C_n$ the \emph{toroidal grid} with $m$ rows and $n$ columns.
 In \cite{kaul2018}, H. Kaul and J. A. Mudrock showed that the Alon--Tarsi number of the Cartesian product of an odd cycle and a path is always equal to 3.
 L. Cai, W. Wang, and X. Zhu considered toroidal   grids $T_{m,n}$, where $m, n\geq 3$ and
 conjectured that  every toroidal grid is 3-choosable \cite{zhu10toroidal}.  It is easy to prove by
induction that $T_{m,n}$ is 3-colorable for all $m, n\geq 3$.
In this paper, we determine the Alon--Tarsi number  of  a toroidal grid $T_{m,n}$ which generalizes the result of  \cite{zhiguoliAT}:

$$AT(T_{m,n})=\left\{\begin{array}{ll}
   4, &n, m\text{~both odd},\\
   3, &\text{else}.
   \end{array}\right.$$

  As a byproduct, when $m,n$ are both even or one of them is odd and another is even, we get $\chi_l(T_{m,n})=3$. These support the conjecture in a positive way. When $m, n $ are both odd,  the list coloring number  of toroidal grids $T_{m,n}$ is undetermined although it is easy to get that $3\leq\chi_l(T_{m,n})\leq 4$ in this case.

The paper is organized as follows. In Section 2 we give basic properties and
illustrate the techniques we shall use in the proof of our main result, which is
given in Section 3. Finally we  discuss  some open problems in Section 4.

\section{Preliminaries}

Let $G = (V , E)$ be an undirected simple graph with vertex set $\{1,\ldots,n\}$. The graph polynomial of $G$
is defined by
$$f_G(x_1,x_2,\ldots,x_n)=\prod_{1\leq i<j\leq n, (i,j)\in E}(x_i-x_j).$$

It is clear that the graph polynomial encodes information about its proper colorings. Indeed,
a graph $G$ is $k$-colorable if and only if there exists an $n$-tuple $(a_1,a_2,\ldots,a_n)\in \{0, 1,\ldots,k-1\}^n$
such that $f_G(a_1,a_2,\ldots,a_n)\neq 0$. Similarly, $G$ is $k$-choosable if and only if for an arbitrary field $\mathbb{F}$ and for every family of sets
${S_i \subset \mathbb{F}: 1\leq i \leq n}$, each of size at least $k$, there exists an $n$-tuple $(a_1,a_2,\ldots,a_n)\in S_1\times S_2\times \cdots \times S_n$
such that $f_G(a_1,a_2,\ldots,a_n)\neq 0$.

The following theorem gives a sufficient condition for the existence of such an $n$-tuple.
\begin{theorem}[Combinatorial Nullstellensatz, \cite{Alon1999comb}]\label{thm:CN}
Let $\mathbb{F}$ be an arbitrary field, and let $f = f (x_1, \ldots , x_n)$
be a polynomial in $\mathbb{F}[x_1,\ldots , x_n]$. Suppose that the degree $\deg( f )$ of $f$ is
$\sum_{i=1}^{n}t_i$, where each $t_i$ is a nonnegative
integer, and suppose that the coefficient of $\prod_{i=1}^{n} x_i^{t_i}$
  in $f$ is non-zero. Then, if $S_1, \ldots, S_n$ are subsets
of $\mathbb{F}$ with $|S_i | >t_i$, then there are $s_1\in S_1, s_2 \in S_2, \ldots, s_n \in S_n $  so that
$f (s_1, \ldots, s_n)\neq 0$.
\end{theorem}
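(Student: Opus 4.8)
The plan is to deduce the statement from a more elementary vanishing lemma and then control a single top-degree coefficient through a degree-lowering substitution. The lemma I would isolate first is the following: if $g\in\mathbb{F}[x_1,\dots,x_n]$ has degree at most $t_i$ in the variable $x_i$ for each $i$, and $g$ vanishes on the entire grid $S_1\times\cdots\times S_n$ with $|S_i|\ge t_i+1$, then $g$ is the zero polynomial. Granting this lemma, the theorem follows by contradiction: assuming $f$ vanishes on the whole grid, I will replace $f$ by a reduced polynomial $\bar f$ with bounded individual degrees that agrees with $f$ at every grid point, so that the lemma forces $\bar f\equiv 0$, while a careful degree count shows the coefficient of $\prod_i x_i^{t_i}$ survives the reduction and is nonzero, a contradiction.

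First I would prove the lemma by induction on $n$. The base case $n=1$ is just the fact that a nonzero univariate polynomial of degree at most $t_1$ has at most $t_1$ roots, so it cannot vanish on the set $S_1$ of size at least $t_1+1$. For the inductive step I would expand $g$ as a polynomial in $x_n$, writing $g=\sum_{i=0}^{t_n} g_i(x_1,\dots,x_{n-1})\,x_n^i$, fix an arbitrary point $(s_1,\dots,s_{n-1})$ in $S_1\times\cdots\times S_{n-1}$, and observe that the resulting univariate polynomial in $x_n$ vanishes on all of $S_n$ and hence is identically zero; so every coefficient $g_i(s_1,\dots,s_{n-1})$ equals $0$. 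Since the point was arbitrary, each $g_i$ vanishes on $S_1\times\cdots\times S_{n-1}$ and has degree at most $t_j$ in each $x_j$, so the inductive hypothesis yields $g_i\equiv 0$ for every $i$, and therefore $g\equiv 0$.

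To derive the theorem itself, I would argue by contradiction, assuming $f$ vanishes at every point of $S_1\times\cdots\times S_n$; after discarding extra elements I may assume $|S_i|=t_i+1$. For each $i$ set $g_i(x_i)=\prod_{s\in S_i}(x_i-s)$, a monic polynomial of degree $t_i+1$, so that $x_i^{t_i+1}=x_i^{t_i+1}-g_i(x_i)$ expresses $x_i^{t_i+1}$ as a combination of strictly lower powers of $x_i$, an identity that holds at every $s\in S_i$. Repeatedly using these relations to rewrite $f$, I would produce a polynomial $\bar f$ of degree at most $t_i$ in each $x_i$ that agrees with $f$ at every grid point; by the lemma $\bar f\equiv 0$. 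The crucial point is that each substitution strictly decreases the total degree, and since $\deg f=\sum_i t_i$ the monomial $\prod_i x_i^{t_i}$ already realizes the maximal possible total degree, so no other monomial of $f$ can be rewritten into it; hence the coefficient of $\prod_i x_i^{t_i}$ in $\bar f$ equals its coefficient in $f$, which is nonzero, contradicting $\bar f\equiv 0$.

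The step that needs the most care, and that I expect to be the main obstacle, is the bookkeeping in this reduction: I must verify both that replacing $x_i^{t_i+1}$ by lower powers never alters the evaluation of $f$ at any grid point, and that it cannot manufacture the monomial $\prod_i x_i^{t_i}$ out of any other term. Here the hypothesis $\deg f=\sum_i t_i$ is indispensable, since it guarantees every reduction strictly lowers total degree and therefore leaves the unique top-degree monomial untouched. Once this is pinned down, combining it with the vanishing lemma closes the argument.
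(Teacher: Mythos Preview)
The paper does not supply its own proof of this theorem: it is quoted from \cite{Alon1999comb} and used as a black box, so there is nothing in the paper to compare your argument against.

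That said, your proposal is correct and is essentially Alon's original proof. The two ingredients---the vanishing lemma for grid-bounded polynomials (proved by induction on $n$) and the reduction modulo the monic polynomials $g_i(x_i)=\prod_{s\in S_i}(x_i-s)$---are exactly what Alon uses. Your key observation, that every reduction step strictly lowers the total degree and hence cannot produce the top monomial $\prod_i x_i^{t_i}$ from any other term of $f$, is the standard way to exploit the hypothesis $\deg f=\sum_i t_i$. The only place where your write-up could be tightened is the termination of the rewriting process: it is easiest to argue variable by variable, noting that replacing $x_i^{t_i+1}$ by $x_i^{t_i+1}-g_i(x_i)$ strictly lowers the exponent of $x_i$ in each affected monomial, so after finitely many steps every exponent of $x_i$ is at most $t_i$, and one then moves on to the next variable. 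With that small clarification your argument is complete.
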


We are now ready to define the main concept of this paper.
\begin{definition}[\cite{hefetz11}]
Let $G = (V , E)$ be a graph with vertex set $V = \{v_1,\ldots, v_n\}$.
We say that $G$ is Alon--Tarsi $k$-choosable,  if there exists a monomial $c\prod_{i=1}^{n}x_i^{t_i}$
 in the expansion of $f_G$ such that $c \neq 0$ (say, in   $\mathbb{R}$) and $t_i\leq k- 1$ for every $1 \leq i \leq n$.   The smallest integer
$k$ for which $G$ is Alon--Tarsi $k$-choosable, denoted by $AT(G)$, is called the Alon--Tarsi number of $G$.
\end{definition}

According to  this definition  and Alon--Tarsi Theorem, it follows  that $AT(G)\geq \chi_l(G)$ for every graph $G$. The converse inequality, however, does not hold in general. Indeed, it was proved  in \cite{Erdos1979}  that $\chi_l(K_{n,n})=(1+o(1))\log_2n$. On the other hand, it is clear from the definiton that $AT(K_{n,n})\geq n/2$ (It was proved in \cite{Alon1992} that $AT(K_{n,n})= \lceil n/2 \rceil +1$ ).

Theoretically, \autoref{thm:CN} can be applied to many graph coloring and combinatorial problems. However, proving that some appropriate monomial does not vanish is often extremely hard. While, in the case of toroidal grids, we can get
the some information about the coefficients of some appropriate monomials by following coefficient formula.

Let $\mathbb{F}$ be an arbitrary field and let $A_1,\ldots,A_n$ be any finite subsets of $\mathbb{F}$. Define the function $N: A_1\times\cdots \times A_n\to\mathbb{F}$ by
$$ N(a_1,\ldots,a_n)=\prod_{i=1}^{n}\prod_{b\in A_i\setminus \{a_i\}} (a_i-b).$$

\begin{theorem}[Coefficient Formula, see \cite{petrov2012},\cite{Lason10comb}]
Suppose  a polynomial  $f(x_1,x_2,\ldots,x_n)$ over field $\mathbb{F}$ has degree at most $\sum_{i=1}^n t_i$ and let $[\prod_{i=1}^{n}x_i^{t_i}]f(x_1,x_2,\ldots,x_n)$ denote the coefficient of $x_1^{t_1}\cdots x_n^{t_n}$ in $f$. Then for any
sets $A_1,\ldots,A_n$ in $\mathbb{F}$ such that $|A_i|=t_i+1$ we have

$$\left[\prod_{i=1}^{n}x_i^{t_i}\right]f(x_1,x_2,\ldots,x_n)=\sum_{(a_1,\ldots,a_n)\in A_1\times\cdots \times A_n} \frac{f(a_1,\ldots,a_n)}{N(a_1,\ldots,a_n)}.$$
\end{theorem}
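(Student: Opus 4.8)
The plan is to exploit the \emph{linearity} of both sides of the claimed identity in $f$. Both the coefficient functional $\left[\prod_{i=1}^n x_i^{t_i}\right]$ and the interpolation sum $\sum_{(a_1,\ldots,a_n)} f(a_1,\ldots,a_n)/N(a_1,\ldots,a_n)$ are $\mathbb{F}$-linear in $f$, so it suffices to verify the formula when $f$ is a single monomial $\prod_{i=1}^n x_i^{d_i}$ with $\sum_{i=1}^n d_i \le \sum_{i=1}^n t_i$; an arbitrary $f$ of degree at most $\sum_i t_i$ is a linear combination of such monomials. For a monomial the right-hand side factorizes across the product set $A_1\times\cdots\times A_n$, since the summand splits as a product of univariate terms and one may distribute the sum over the product:
\begin{equation*}
\sum_{(a_1,\ldots,a_n)} \frac{\prod_{i=1}^n a_i^{d_i}}{N(a_1,\ldots,a_n)} = \prod_{i=1}^n \left( \sum_{a\in A_i} \frac{a^{d_i}}{\prod_{b\in A_i\setminus\{a\}}(a-b)} \right).
\end{equation*}
Thus everything reduces to evaluating the univariate sum appearing in each factor.

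The key univariate lemma I would isolate is this: if $A\subseteq\mathbb{F}$ has $t+1$ distinct elements and $0\le d\le t$, then
\begin{equation*}
\sum_{a\in A} \frac{a^{d}}{\prod_{b\in A\setminus\{a\}}(a-b)} = \begin{cases} 1, & d=t,\\ 0, & d<t. \end{cases}
\end{equation*}
I would prove this by \emph{Lagrange interpolation}. Writing $A=\{a_0,\ldots,a_t\}$, the polynomial $x^{d}$ has degree $d\le t=|A|-1$, so it is reproduced exactly by its Lagrange interpolant through the nodes of $A$:
\begin{equation*}
x^{d} = \sum_{i=0}^t a_i^{d} \prod_{j\ne i} \frac{x-a_j}{a_i-a_j}.
\end{equation*}
Comparing the coefficient of $x^t$ on both sides yields the lemma: each product $\prod_{j\ne i}(x-a_j)$ is monic of degree $t$, so the coefficient of $x^t$ on the right is exactly the claimed sum, while the left-hand side contributes a nonzero $x^t$ term precisely when $d=t$.

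Finally I would assemble the two cases. If $d_i=t_i$ for every $i$, then each univariate factor equals $1$ by the lemma, so the right-hand side is $1$; and the coefficient $\left[\prod_i x_i^{t_i}\right]\prod_i x_i^{d_i}$ is also $1$. If instead $d_i\ne t_i$ for some $i$, then the degree constraint $\sum_i d_i\le\sum_i t_i$ forces at least one index $j$ with $d_j<t_j$; the corresponding factor vanishes by the lemma, so the whole product, hence the right-hand side, is $0$, matching the coefficient $0$ on the left. This settles the identity for every monomial and therefore, by linearity, for all $f$ of degree at most $\sum_i t_i$.

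The computation is short, and I expect no deep obstacle; the one point requiring genuine care is the case analysis in the last step. The univariate lemma as stated covers only exponents $d\le t$, yet after factorization an individual $d_i$ could exceed $t_i$. The hypothesis $\sum_i d_i\le\sum_i t_i$ is exactly what rescues the argument: whenever the monomial differs from $\prod_i x_i^{t_i}$, some exponent must drop strictly below its target, and that single vanishing factor annihilates the entire product, so the sums with $d>t$ never need to be evaluated. Making this dependence on the total-degree bound explicit is the crux of a clean writeup.
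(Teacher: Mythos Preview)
The paper does not give a proof of this theorem; it is quoted in the Preliminaries section as a known result from the cited references, so there is no in-paper argument to compare against. Your proof is correct and is in fact the standard one (essentially the argument in Laso\'n's paper): linearity reduces to monomials, the right-hand side factorizes over the coordinates, and Lagrange interpolation handles the resulting one-variable sums. Your final paragraph correctly identifies the only delicate point---that an individual exponent $d_i$ may exceed $t_i$---and correctly disposes of it via the total-degree constraint, which guarantees a vanishing factor elsewhere in the product.
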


\section{Main results}

In \cite{zhiguoliAT}, the authors have showed the following:

\begin{proposition}[{\cite{zhiguoliAT}}]
$AT(T_{2m,2n} )=\chi_l(T_{2m,2n} )=3$, for $m\geq 2,n\geq 2.$.
\end{proposition}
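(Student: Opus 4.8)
The plan is to obtain $AT(T_{2m,2n})=\chi_l(T_{2m,2n})=3$ from the inequality $\chi_l\le AT$ (recalled above) together with $AT(T_{2m,2n})\ge 3$, $\chi_l(T_{2m,2n})\ge 3$ and $AT(T_{2m,2n})\le 3$, the first two being essentially free. For $AT\ge 3$: $T_{2m,2n}$ is $4$-regular, so $f_{T_{2m,2n}}$ is homogeneous of degree $|E|=2|V|$, hence every monomial occurring in it has some exponent $\ge 2$, so $T_{2m,2n}$ is not Alon--Tarsi $2$-choosable. For $\chi_l\ge 3$: $T_{2m,2n}$ is connected and $4$-regular, so it is not $2$-choosable, since a connected $2$-choosable graph with minimum degree at least $2$ must be an even cycle or a theta graph $\Theta_{2,2,2k}$ \cite{Erdos1979}, and $T_{2m,2n}$ is neither.

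For the one substantial point, $AT(T_{2m,2n})\le 3$, I would use the circulation form of the Alon--Tarsi number. Label the vertices by $(i,j)\in\mathbb{Z}_{2m}\times\mathbb{Z}_{2n}$ and orient every edge ``forward'' along its cycle: the vertical edge between $(i,j)$ and $(i{+}1,j)$ from $(i,j)$ to $(i{+}1,j)$, the horizontal edge between $(i,j)$ and $(i,j{+}1)$ from $(i,j)$ to $(i,j{+}1)$, indices taken modulo $2m$ and $2n$. In the resulting orientation $D$ every vertex has in-degree $2$ (the arcs coming from $(i{-}1,j)$ and from $(i,j{-}1)$), so it suffices to show that $D$ has no odd circulation; it certainly has even ones, for instance the empty set and the set of all $4mn$ vertical arcs. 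Record a circulation by the $\{0,1\}$-indicators $v_{i,j},h_{i,j}$ of its vertical and horizontal arcs; balance at $(i,j)$ reads $v_{i-1,j}+h_{i,j-1}=v_{i,j}+h_{i,j}$. Summing this over $j\in\mathbb{Z}_{2n}$ for fixed $i$ cancels the $h$-terms, so $\sum_j v_{i,j}$ is independent of $i$; call it $V$. Summing over $i\in\mathbb{Z}_{2m}$ for fixed $j$ gives likewise that $\sum_i h_{i,j}$ is independent of $j$; call it $H$. Hence the number of arcs of any circulation is $\sum_{i,j}v_{i,j}+\sum_{i,j}h_{i,j}=2mV+2nH$, which is even because $2m$ and $2n$ are even. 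So $D$ has no odd circulation, its even and odd circulation counts differ, and $AT(T_{2m,2n})\le 3$.

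The heart of the proof is the second paragraph: the work is in choosing the forward orientation, after which the flux bookkeeping is very short, and the hypothesis is used exactly in the evenness of $2mV+2nH$ --- which requires \emph{both} side lengths to be even, and this is precisely what makes the even--even case different from the others. Attacking the coefficient $[\prod_v x_v^2]f_{T_{2m,2n}}$ head-on, whether by expansion or by a transfer-matrix computation through the Coefficient Formula, is possible but much more laborious, so I would avoid it; the argument above in fact identifies that coefficient, up to sign, with the positive number of circulations of $D$. Everything else --- the two lower bounds and the passage between the polynomial and circulation formulations --- is routine.
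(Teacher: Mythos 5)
Your proposal is correct, but note that the paper itself gives no argument to compare against: this proposition is quoted from \cite{zhiguoliAT} as a known result, so your write-up is a self-contained substitute rather than a variant of an in-paper proof. Checking it: the lower bound $AT(T_{2m,2n})\ge 3$ from homogeneity (degree $2N$ spread over $N$ variables forces some exponent $\ge 2$) is fine, and you are right that the list lower bound needs the Erd\H{o}s--Rubin--Taylor characterization of $2$-choosable graphs rather than the chromatic number, since $T_{2m,2n}$ is bipartite; a $4$-regular graph is its own core and is neither an even cycle nor $\Theta_{2,2,2k}$. The heart of your argument, the all-forward orientation, also checks out: every in-degree is $2$, the balance equation $v_{i-1,j}+h_{i,j-1}=v_{i,j}+h_{i,j}$ gives constant row sums $V$ and column sums $H$, every circulation has $2mV+2nH$ arcs, which is even precisely because both side lengths are even, so there are no odd circulations while the empty one is even; combined with the Alon--Tarsi equivalence between the circulation count and the coefficient of $\prod_v x_v^{2}$ (which the paper states), this yields $AT\le 3$ and hence $\chi_l=AT=3$. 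Your closing remark is also on point: this one-sided parity count is exactly what is special to the even--even case, and it explains why the paper must resort to the coefficient formula, the transfer matrix of \autoref{lem:trace}, and the antihermitian-matrix computation to handle $T_{2m+1,2n}$, where no such cheap parity argument is available.
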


\begin{lemma}\label{lem:trace}
Let $H=(V,E)$ be a $2d$-regular graph on the vertex set $X=\{v_1,\ldots,v_n\}$, $G=H\square C_k$. Define $N=nk$. Fix a field $\mathbb{F}$ and a subset $A\subset \mathbb{F}$, $|A|=d+2$. Let $\mathcal{U}$ denote the set of all proper $(d+2)$-colorings $u=(u_1,\ldots,u_n)\in A^n$ of the vertices of $H$ with colors taken from $A$. Consider the square matrix $M$ with rows and columns indexed by the elements of $\mathcal{U}$:
\[
M_{u,v}=f_H(u_1,\ldots,u_n)\cdot \prod_{i=1}^n \frac{u_i-v_i}{\prod_{b\in A\setminus\{u_i\}} (u_i-b)}
\]
for two proper colorings $u,v\in \mathcal{U}$. Then
$$
\left[\prod_{i=1}^N x_i^{d+1}\right] f_G(x_1,\dots,x_N)=\mathrm{tr}\,M^k,
$$
where the variables $x_i$ correspond to all $N$ vertices of $G$.
\end{lemma}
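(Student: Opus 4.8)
The plan is to apply the Coefficient Formula (the second theorem of Section~2) to the graph polynomial $f_G$, taking all $N$ of the sets equal to the given set $A$, and then to recognise the resulting sum as $\operatorname{tr}M^{k}$. First one checks the degree hypothesis: since $H$ is $2d$-regular, $G=H\square C_k$ is $(2d+2)$-regular, so $\deg f_G=|E(G)|=\tfrac12 N(2d+2)=N(d+1)=\sum_{i=1}^{N}(d+1)$. Hence, with $t_i=d+1$ for every $i$, one has $\deg f_G\le\sum_i t_i$ (with equality), and $|A|=d+2=t_i+1$ is exactly the prescribed size, so the Coefficient Formula yields
\[
\Bigl[\prod_{i=1}^{N}x_i^{d+1}\Bigr]f_G=\sum_{a\in A^{N}}\frac{f_G(a)}{\prod_{i=1}^{N}\prod_{b\in A\setminus\{a_i\}}(a_i-b)},
\]
the denominators being nonzero because the elements of $A$ are distinct.

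Next one uses the product structure of $f_G$ along the cycle factor. Index the $N=nk$ vertices of $G$ by pairs $(v_j,\ell)$ with $v_j\in X$ and $\ell\in\mathbb{Z}/k$, and encode a point $a\in A^{N}$ as a $k$-tuple of ``layers'' $u^{(0)},\dots,u^{(k-1)}\in A^{n}$ with $u^{(\ell)}_j=a_{(v_j,\ell)}$. Orienting $G$ by a fixed reference orientation of $H$ inside each layer together with the directed cycle $0\to1\to\cdots\to k-1\to0$ in the $C_k$-direction, the arc set of $G$ is the disjoint union of $k$ copies of that of $H$ and of the vertical arcs $(v_j,\ell)\to(v_j,\ell+1)$, which gives
\[
f_G(a)=\prod_{\ell\in\mathbb{Z}/k}f_H\bigl(u^{(\ell)}\bigr)\cdot\prod_{\ell\in\mathbb{Z}/k}\prod_{j=1}^{n}\bigl(u^{(\ell)}_j-u^{(\ell+1)}_j\bigr),
\]
while the denominator in the Coefficient Formula factors as $\prod_{\ell}\prod_{j}\prod_{b\in A\setminus\{u^{(\ell)}_j\}}(u^{(\ell)}_j-b)$. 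Dividing and collecting the factors according to the layer $\ell$ gives
\[
\frac{f_G(a)}{\prod_{i=1}^{N}\prod_{b\in A\setminus\{a_i\}}(a_i-b)}=\prod_{\ell\in\mathbb{Z}/k}\Biggl(f_H\bigl(u^{(\ell)}\bigr)\prod_{j=1}^{n}\frac{u^{(\ell)}_j-u^{(\ell+1)}_j}{\prod_{b\in A\setminus\{u^{(\ell)}_j\}}(u^{(\ell)}_j-b)}\Biggr).
\]

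Finally, the factor attached to layer $\ell$ is exactly $M_{u^{(\ell)},u^{(\ell+1)}}$ whenever $u^{(\ell)}\in\mathcal{U}$, and any summand in which some layer $u^{(\ell)}$ is not a proper coloring of $H$ contributes $0$, because then $f_H(u^{(\ell)})=0$. Therefore the sum over $A^{N}$ collapses to $\sum_{(u^{(0)},\dots,u^{(k-1)})\in\mathcal{U}^{k}}\prod_{\ell\in\mathbb{Z}/k}M_{u^{(\ell)},u^{(\ell+1)}}$, which is by definition $\operatorname{tr}M^{k}$, proving the lemma. No step here is conceptually deep; the points that need care are the degree count (so that the Coefficient Formula is legitimately applicable) and the orientation convention used to factor $f_G$ — with a plain lexicographic vertex ordering of $V(G)$ one would instead obtain $(-1)^{n}\operatorname{tr}M^{k}$, since each of the $n$ vertical cycles then contributes one ``wrap-around'' edge of the opposite sign, but this is immaterial for the Alon--Tarsi number, which depends only on whether the coefficient vanishes.
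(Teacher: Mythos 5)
Your proof is correct and follows essentially the same route as the paper: apply the Coefficient Formula with every list equal to $A$, observe that the nonzero summands correspond to sequences $u^{1},\dots,u^{k}\in\mathcal{U}$ of proper colorings of $H$, and recognize each summand as the cyclic product $M_{u^{1},u^{2}}\cdots M_{u^{k},u^{1}}$, whose sum is $\mathrm{tr}\,M^{k}$. Your additional care with the degree count and with the ordering/sign convention (the paper is silent on this; with a plain lexicographic labeling the coefficient is $(-1)^{n}\mathrm{tr}\,M^{k}$, which is immaterial for the vanishing/non-vanishing used in the corollaries) only makes the argument more complete.
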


\begin{proof}
By the coefficient formula, we have the following:
\begin{equation}\label{eq:coeff}
\left[\prod_{i=1}^N x_i^{d+1}\right] f_G=\sum_{(a_1,\dots,a_N)\in A^N} \frac{f_G(a_1,\ldots,a_N)}{\prod_{i=1}^N \prod_{b\in A\setminus\{a_i\}} (a_i-b)}.
\end{equation}
The non-zero summands in the RHS of \eqref{eq:coeff} correspond to proper colorings of $G$. Any proper coloring of $G$ corresponds to a sequence $u^1,\ldots,u^k\in \mathcal{U}$ of the proper colorings of $H$. For such a sequence, the corresponding summand in RHS of \eqref{eq:coeff} reads as
$$
M_{u^1,u^2}\cdot M_{u^2,u^3}\cdot \ldots \cdot M_{u^k,u^1}.
$$
The sum of such products is exactly $\mathrm{tr}\, M^k$.
\end{proof}

\begin{corollary}\label{cor:oddeven}
Let $G$ denote $T_{2m+1,2n}$ for $m\geq 1$, $n\geq 2$, let $N=(2m+1)(2n)$. Then
\[
\left[\prod_{i=1}^N x_i^2 \right] f_G(x_1,\dots,x_N)\neq 0.
\]
\end{corollary}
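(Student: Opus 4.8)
The plan is to apply \autoref{lem:trace} with $H=C_{2m+1}$ and $k=2n$. Since $C_{2m+1}$ is $2$-regular we take $d=1$, so that $H\Box C_k=C_{2m+1}\Box C_{2n}=T_{2m+1,2n}=G$ and $d+1=2$; the lemma then yields $\bigl[\prod_{i=1}^N x_i^2\bigr]f_G=\mathrm{tr}\,M^{2n}$, where $M$ is the matrix indexed by the proper $3$-colorings of $C_{2m+1}$ with colors from a three-element set $A\subseteq\mathbb{F}$ of our choosing. So the task reduces to showing $\mathrm{tr}\,M^{2n}\neq 0$. Two structural facts will drive everything: $M=DB$ with $D$ diagonal and invertible (all $u\in\mathcal U$ are proper) and $B_{u,v}=\prod_i(u_i-v_i)$, and $B^{T}=(-1)^{2m+1}B=-B$, i.e.\ $B$ is \emph{skew-symmetric}, precisely because $C_{2m+1}$ has an odd number of vertices; meanwhile the exponent $2n$ is \emph{even}. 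It is this pairing — an odd factor $C_{2m+1}$ against an even factor $C_{2n}$ — that the argument must exploit.

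Next I would make the choice $A=\{1,\zeta,\zeta^2\}$, the three cube roots of unity in $\mathbb C$, which makes $M$ completely explicit. Here $\prod_{b\in A\setminus\{a\}}(a-b)=3a^2$ and $(\zeta-1)^2=-3\zeta$, and a short computation shows that for a proper coloring $u=(u_1,\dots,u_{2m+1})$ of $C_{2m+1}$, written $u_i=\zeta^{a_i}$ with $a_i\in\mathbb Z/3$, one has $D_{uu}=\gamma\,(-1)^{t(u)}\zeta^{-(a_1+\dots+a_{2m+1})}$ for a fixed nonzero scalar $\gamma$, where $t(u)$ counts the edges of $C_{2m+1}$ along which $a$ drops by $2$ modulo $3$. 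Substituting into $M_{u,v}$ and collecting the powers of $\zeta$ shows $M_{u,v}=\alpha\,(-1)^{t(u)}\prod_i\bigl(1-\zeta^{v_i-u_i}\bigr)$. Thus, after passing harmlessly (this does not change any $\mathrm{tr}\,M^k$) to all $\mathbb Z/3$-colorings of $C_{2m+1}$, we have $M=\alpha\,\widehat D\,r^{\otimes(2m+1)}$, where $\alpha\neq 0$, $\widehat D$ is a real diagonal matrix ($\pm1$ on proper colorings, $0$ on improper ones), and $r$ is the fixed $3\times3$ circulant matrix with first row $(0,1,-\zeta^2)$, whose eigenvalues are $\rho,-\rho,0$ with $|\rho|=\sqrt 3$.

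Since $r$ has rank $2$, writing $r=\rho(P_{+}-P_{-})$ in terms of its spectral projections gives $r^{\otimes(2m+1)}=\rho^{2m+1}(P_{+}-P_{-})^{\otimes(2m+1)}$, whose $2^{2m+1}$-dimensional range carries an orthonormal frame $W$ on which $(P_{+}-P_{-})^{\otimes(2m+1)}$ acts as the $\pm1$-diagonal involution $\Sigma=\mathrm{diag}\bigl((-1)^{|S|}\bigr)_{S\subseteq\{1,\dots,2m+1\}}$. Hence $\mathrm{tr}\,M^{2n}=(\alpha\rho^{2m+1})^{2n}\,\mathrm{tr}\bigl((\Sigma\widetilde D)^{2n}\bigr)$ with $\widetilde D=W^{*}\widehat D\,W$ Hermitian (because $\widehat D$ is real symmetric). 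One checks $(\alpha\rho^{2m+1})^2=-3^{2m+1}$, so $(\alpha\rho^{2m+1})^{2n}=(-3^{2m+1})^n\neq 0$, and the problem collapses to proving $\mathrm{tr}\bigl((\Sigma\widetilde D)^{2n}\bigr)\neq 0$. Since $2n$ is even this equals $\mathrm{tr}\bigl((\widetilde D'\widetilde D)^{n}\bigr)$ with $\widetilde D'=\Sigma\widetilde D\Sigma$ again Hermitian — the trace of the $n$-th power of a product of two Hermitian matrices, hence a real number.

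The main obstacle is this last step. The clean way to finish is to show that the small explicit matrix $\Sigma\widetilde D$ has real spectrum and a nonzero eigenvalue (equivalently, that $M$ is not nilpotent and has spectrum on a single axis): then $\mathrm{tr}\bigl((\Sigma\widetilde D)^{2n}\bigr)$ is a sum of even powers of nonzero reals, so $\mathrm{tr}\,M^{2n}=(-3^{2m+1})^n\cdot(\text{positive})$ is nonzero for every $n\ge 2$; I would try to establish this by pinning down the eigenvalues of $\Sigma\widetilde D$ from the circulant/tensor structure above. An equivalent combinatorial target is available from the Alon--Tarsi interpretation: $\mathrm{tr}\,M^{2n}$ is, up to a nonzero factor, the difference $EE(D)-OE(D)$ between the numbers of even and odd Eulerian subdigraphs of the orientation $D$ of $T_{2m+1,2n}$ in which every $C_{2m+1}$-fibre and every $C_{2n}$-fibre is a directed cycle; one can instead try to prove directly that this difference never vanishes, organising Eulerian subdigraphs fibre by fibre and using the odd length $2m+1$ of the vertical fibres to obstruct complete cancellation. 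In either form, the content is uniform non-vanishing for \emph{every} $n\ge 2$, not merely for large $n$ (where a dominant-eigenvalue estimate would already suffice); everything before this point — reducing to the transfer matrix and specializing the colors — is routine.
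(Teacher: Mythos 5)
Your setup is the same as the paper's: apply \autoref{lem:trace} with $H=C_{2m+1}$, $k=2n$, specialize to $A=\{1,\zeta,\zeta^2\}\subset\mathbb{C}$, and reduce everything to showing $\mathrm{tr}\,M^{2n}\neq 0$. Your explicit computation of $M$ (the factorization $M_{u,v}=\alpha\,(-1)^{t(u)}\prod_i(1-\zeta^{v_i-u_i})$, the circulant factor of rank $2$, and the resulting identity $\mathrm{tr}\,M^{2n}=(-3^{2m+1})^n\,\mathrm{tr}\bigl((\Sigma\widetilde D)^{2n}\bigr)$) is a plausible and even potentially useful reformulation. But the argument stops exactly where the real work begins: you yourself say you ``would try to establish'' that $\Sigma\widetilde D$ has real spectrum and a nonzero eigenvalue (equivalently, that $M$ has purely imaginary spectrum and is not nilpotent). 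That statement is not a routine consequence of anything you have written. $\Sigma$ is an indefinite $\pm1$ diagonal and $\widetilde D$ is merely Hermitian, and a product of two Hermitian matrices need not have real spectrum, nor need it be non-nilpotent; likewise the observation that $B_{u,v}=\prod_i(u_i-v_i)$ is skew-symmetric does not transfer to $M=DB$ after multiplication by the diagonal $D$. So the non-vanishing of $\mathrm{tr}\,M^{2n}$ for every $n\ge 2$ is asserted, not proved, and the alternative ``count Eulerian subdigraphs fibre by fibre'' route is likewise only a proposal.

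For comparison, the missing step is precisely the content of the paper's proof: with the cube-roots-of-unity normalization one shows directly that $M$ is antihermitian, i.e.\ $M_{u,v}=-\overline{M_{v,u}}$, via the identity $1-\varepsilon=\pm i\sqrt3\,\varepsilon^2$ for $\varepsilon\in\{w,w^2\}$ and a parity argument (the ``white/black index'' count showing the relevant number of sign changes around the odd cycle is even); antihermitian matrices are normal, so exhibiting a single nonzero entry (take $v$ to be the cyclic shift of a proper coloring $u$) guarantees a nonzero purely imaginary eigenvalue, whence $(-1)^n\mathrm{tr}\,M^{2n}>0$. Your $\Sigma\widetilde D$ formulation is equivalent to this, but equivalence is not a proof: until you verify the antihermitian relation (or an equivalent real-spectrum-plus-non-nilpotency statement for $\Sigma\widetilde D$), the corollary is not established.
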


\begin{proof}
We apply \autoref{lem:trace} with $H=C_{2m+1}$, $k=2n$. We choose the field $\mathbb{F}=\mathbb{C}$ and the set $A=\{1,w,w^2\}$, where $w=e^{2\pi i/3}$.

We need to prove that $\mathrm{tr}\, M^{2n}\ne 0$. To do that it is sufficient to show that $M$ is a non-zero antihermitian matrix. That implies that the eigenvalues of $M$ are purely imaginary and not all of them are equal to 0, so $(-1)^n\mathrm{tr}\, M^{2n}> 0$.

$M$ is certainly non-zero: for example, $M_{u,v}\neq 0$, where $u=(u_1,\dots,u_{2m+1})\in\mathcal{U}$ is any proper $3$-coloring of $C_{2m+1}$, and $v=(u_{2m+1},u_1,\dots,u_{2m})$.

It remains to prove that $M$ is antihermitian. Throughout the remaining proof we treat indices as cyclic modulo $2m+1$. Let $u=(u_1,\ldots,u_{2m+1})\in \mathcal{U}$ be a proper $3$-coloring of $C_{2m+1}$ with colors $1,w,w^2$. Denote by $u_i^*$ the unique element of the set $\{1,w,w^2\}\setminus \{u_i,u_{i-1}\}$. We apply the relation
\[
\frac1{u_i-u_{i}^*}=\frac{u_i-u_{i-1}}
{\prod_{b\in A\setminus\{u_i\}} (u_i-b)}
\]
to get
\begin{equation}\label{eq:uv*}
M_{u,v}=\prod_{i=1}^{2m+1}\frac{u_i-v_i}{u_i-u_i^*}.
\end{equation}
We need to check that for any $u,v\in\mathcal{U}$
\begin{equation}\label{eq:antiherm}
M_{u,v}=-\overline{M_{v,u}}.
\end{equation}
If $u_i=v_i$ for some $i$, then $M_{u,v}=0=-\overline{M_{v,u}}$. Otherwise applying \eqref{eq:uv*} and substituting $\bar{z}=1/z$ for roots of unity $z\in\{u_i,v_i,u_i^*\}$  we simplify \eqref{eq:antiherm} to the following:
\begin{equation}\label{eq:antiherm2}
\prod_{i=1}^{2m+1} \frac{u_i-u_i^*}{u_i}=-\prod_{i=1}^{2m+1} \frac{v_i^*-v_i}{v_i^*}.
\end{equation}
Denote $\varepsilon_i=u_i/u_{i-1}$, then $\varepsilon_i\in \{w,w^2\}$ and $u_i^*=u_i \varepsilon_i$. Similarly, denote $\delta_i=v_i/v_{i-1}$, so $\delta_i\in\{w,w^2\}$ and $v_i^*=v_i \delta_i$. By definition,
\begin{equation}\label{eq:epsprod}
\prod_{i=1}^{2m+1} \varepsilon_i=\prod_{i=1}^{2m+1} \delta_i=1.
\end{equation}
\eqref{eq:antiherm2} is equivalent to
\[
\prod_{i=1}^{2m+1}(1-\varepsilon_i)=-\prod_{i=1}^{2m+1} (1-\overline{\delta_i})=(-1)^{1+(2m+1)}\prod_{i=1}^{2m+1} \delta_i^{-1}\prod_{i=1}^{2m+1}(1-\delta_i).
\]
Using \eqref{eq:epsprod}, we rewrite this as
\begin{equation}\label{eq:antiherm3}
\prod_{i=1}^{2m+1}(1-\varepsilon_i)=\prod_{i=1}^{2m+1}(1-\delta_i).
\end{equation}
Note that $1-\varepsilon=\pm i\sqrt{3}\varepsilon^2$ for $\varepsilon\in \{w,w^2\}$; the signs are distinct for $w$ and $w^2$. Substituting this for $\varepsilon_i$'s and $\delta_i$'s and using \eqref{eq:epsprod}, we reduce \eqref{eq:antiherm3} to the following fact: the total number of $\varepsilon_i$'s and $\delta_i$'s which are equal to $w$ is even.

Call the index $i$ white if $u_i=w\cdot v_i$ and black if $u_i=w^2\cdot v_i$. Then $\varepsilon_i=\delta_i$ if $i-1$, $i$ have the same color and $\varepsilon_i\ne \delta_i$ if $i-1$, $i$ have different colors. To conclude the proof, note that there are even number of indices $i$ of the second type.
\end{proof}

\begin{theorem}\label{thm:oddeven}
$AT(T_{2m+1,2n} )=3$ for $m\geq 1,n\geq 2.$
\end{theorem}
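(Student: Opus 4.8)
The plan is to deduce \autoref{thm:oddeven} from \autoref{cor:oddeven} together with an easy lower bound. First I would establish $AT(T_{2m+1,2n})\le 3$: by the definition of the Alon--Tarsi number, it suffices to exhibit a monomial $c\prod_{i=1}^N x_i^{t_i}$ with $c\neq 0$ and all $t_i\le 2$. Since $T_{2m+1,2n}$ is $4$-regular, its graph polynomial $f_G$ is homogeneous of degree $|E(G)|=2N$, so the monomial $\prod_{i=1}^N x_i^2$ has the correct total degree $2N=\sum_i t_i$ with each $t_i=2\le 3-1$. \autoref{cor:oddeven} tells us precisely that the coefficient of this monomial is nonzero, so $G$ is Alon--Tarsi $3$-choosable and $AT(T_{2m+1,2n})\le 3$.

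For the matching lower bound $AT(T_{2m+1,2n})\ge 3$, I would argue that no graph containing a cycle can have Alon--Tarsi number $\le 2$. Indeed, $AT(G)\ge\chi_l(G)\ge\chi(G)$, and $T_{2m+1,2n}$ contains the odd cycle $C_{2m+1}$, which is not bipartite, so $\chi(T_{2m+1,2n})\ge 3$. Combining the two bounds gives $AT(T_{2m+1,2n})=3$.

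I do not expect any real obstacle here: the theorem is essentially a repackaging of \autoref{cor:oddeven}, whose proof (the antihermitian-matrix argument) contains all the substance. The only points to be careful about are (i) correctly invoking homogeneity of $f_G$ so that $\prod_i x_i^2$ is genuinely the top-degree-balanced monomial to which the definition of $AT$ applies, and (ii) citing the chain $\chi(G)\le\chi_l(G)\le AT(G)$ mentioned in the introduction for the trivial lower bound. Thus the proof is a short two-line assembly of already-established facts.

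\begin{proof}
Let $G=T_{2m+1,2n}$ and $N=(2m+1)(2n)$. Since $G$ is $4$-regular, its graph polynomial $f_G$ is homogeneous of degree $|E(G)|=2N$, so $\prod_{i=1}^N x_i^{2}$ is a monomial of degree $\sum_{i=1}^N 2=2N=\deg f_G$ with every exponent equal to $2=3-1$. By \autoref{cor:oddeven} its coefficient in $f_G$ is non-zero, so $G$ is Alon--Tarsi $3$-choosable and $AT(G)\le 3$. On the other hand $G$ contains the odd cycle $C_{2m+1}$, hence $\chi(G)\ge 3$, and therefore $AT(G)\ge\chi_l(G)\ge\chi(G)\ge 3$. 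Combining the two inequalities gives $AT(T_{2m+1,2n})=3$.
\end{proof}
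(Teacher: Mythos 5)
Your proof is correct and is essentially the argument the paper intends (the paper leaves Theorem \ref{thm:oddeven} without an explicit proof, treating it as an immediate consequence of \autoref{cor:oddeven}): the nonvanishing coefficient of $\prod_i x_i^2$ gives $AT\le 3$, and the odd cycle $C_{2m+1}$ gives the trivial lower bound via $\chi(G)\le\chi_l(G)\le AT(G)$, exactly as the paper does for its $\chi_l$ corollary.
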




\begin{corollary}
$\chi_l(T_{2m+1,2n} )=3$ for $m\geq 1,n\geq 2.$.
\end{corollary}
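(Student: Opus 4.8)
The plan is to sandwich $\chi_l(T_{2m+1,2n})$ between the bounds $3$ and $3$, using the chain of inequalities $\chi(G)\le \chi_l(G)\le AT(G)$ recorded in the introduction together with the value of the Alon--Tarsi number computed just above in \autoref{thm:oddeven}.

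First I would establish the upper bound. Since $\chi_l(G)\le AT(G)$ holds for every graph $G$ as a consequence of the Alon--Tarsi theorem, and since \autoref{thm:oddeven} gives $AT(T_{2m+1,2n})=3$ for $m\ge 1$ and $n\ge 2$, we immediately obtain
\[
\chi_l(T_{2m+1,2n})\le AT(T_{2m+1,2n})=3.
\]
This is the substantive half of the statement, but all of its difficulty has already been absorbed into \autoref{thm:oddeven} (and the underlying non-vanishing result of \autoref{cor:oddeven}), so here it is merely a matter of quoting that theorem.

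For the lower bound I would use the other standard inequality $\chi_l(G)\ge \chi(G)$ together with the observation that $T_{2m+1,2n}$ is not bipartite. Indeed, the toroidal grid $C_{2m+1}\square C_{2n}$ contains $C_{2m+1}$ as a subgraph --- for instance any single ``row'' copy of the odd cycle --- and the presence of an odd cycle forces $\chi(T_{2m+1,2n})\ge 3$. Hence $\chi_l(T_{2m+1,2n})\ge \chi(T_{2m+1,2n})\ge 3$. Combining the two bounds yields $\chi_l(T_{2m+1,2n})=3$. There is essentially no obstacle in this step: the corollary is a direct numerical consequence of the two elementary inequalities relating the chromatic number, the choice number, and the Alon--Tarsi number, once the value $AT=3$ is in hand.
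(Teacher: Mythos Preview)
Your proof is correct and follows essentially the same route as the paper: the lower bound comes from the odd cycle $C_{2m+1}$ sitting inside $T_{2m+1,2n}$, and the upper bound from $\chi_l\le AT$ together with \autoref{thm:oddeven}. The only cosmetic difference is that the paper writes the lower bound as $\chi_l(T_{2m+1,2n})\ge \chi(C_{2m+1})\ge 3$ directly, whereas you route it through $\chi(T_{2m+1,2n})$.
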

\begin{proof}
Since $C_{2m+1}$ is a subgraph of $T_{2m+1,2n}$, $\chi_l(T_{2m+1,2n})\geq\chi(C_{2m+1})\geq 3.$  Because  $\chi_l(T_{2m+1,2n})\leq AT(T_{2m+1,2n})$, the result is followed by \autoref{thm:oddeven}.
\end{proof}

\begin{corollary}\label{cor:oddodd}
Let $G$ denote $T_{2m+1,2n+1}$ for $m,n\geq 1$, let $N=(2m+1)(2n+1)$, then $$\left[\prod_{i=1}^{N}x_i^2\right]f_G(x_1,x_2,\ldots,x_{N})=0.$$
\end{corollary}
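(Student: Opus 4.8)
The plan is to reuse the setup from the proof of \autoref{cor:oddeven}. I would apply \autoref{lem:trace} with $H=C_{2m+1}$ (so $d=1$), $k=2n+1$, field $\mathbb{F}=\mathbb{C}$, and $A=\{1,w,w^2\}$ where $w=e^{2\pi i/3}$. This yields
\[
\left[\prod_{i=1}^{N} x_i^2\right] f_G(x_1,\dots,x_N)=\mathrm{tr}\,M^{2n+1},
\]
where $M$ is precisely the matrix appearing in the proof of \autoref{cor:oddeven}. The key point is that $M$ depends only on $H$ and $A$, not on $k$, and that earlier proof already shows that $M$ is a (nonzero) antihermitian matrix.

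Next I would exploit the parity of the exponent. Writing $M=iB$ with $B$ Hermitian, we get $M^{2n+1}=i^{2n+1}B^{2n+1}=(-1)^n i\,B^{2n+1}$, and hence $\mathrm{tr}\,M^{2n+1}=(-1)^n i\,\mathrm{tr}\,B^{2n+1}$. Since $B$ is Hermitian its eigenvalues are real, so $\mathrm{tr}\,B^{2n+1}$ is real and therefore $\mathrm{tr}\,M^{2n+1}$ is purely imaginary.

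Finally, I would observe that $f_G=\prod_{(i,j)\in E}(x_i-x_j)$ has integer coefficients, so $\left[\prod_{i=1}^{N} x_i^2\right] f_G$ is an integer, in particular a real number. A number that is simultaneously real and purely imaginary must be $0$, which gives the claim.

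There is essentially no obstacle here: all the substantial work, namely the antihermitian property of $M$, was already carried out in \autoref{cor:oddeven}, and the rest is a one-line parity argument. The contrast with \autoref{cor:oddeven} is instructive: there $k$ is even, so $\mathrm{tr}\,M^k$ is real and one must work to show it is nonzero; here $k$ is odd, so $\mathrm{tr}\,M^k$ is purely imaginary and the integrality of the coefficient forces it to vanish.
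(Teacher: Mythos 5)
Your proposal is correct and follows essentially the same route as the paper: apply \autoref{lem:trace} with $H=C_{2m+1}$, $k=2n+1$, reuse the antihermitian matrix $M$ from \autoref{cor:oddeven} to conclude that $\mathrm{tr}\,M^{2n+1}$ is purely imaginary, and compare with the fact that the coefficient is real to force it to vanish. The only cosmetic difference is that you make the parity step explicit via $M=iB$ with $B$ Hermitian, whereas the paper argues directly from the purely imaginary eigenvalues; the content is identical.
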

\begin{proof}
 In our situation $H=C_{2m+1}$, $k=2n+1$, by \autoref{lem:trace} we have $$\left[\prod_{i=1}^{N}x_i^2\right]f_G(x_1,x_2,\ldots,x_{N})=\mathrm{tr}\, M^{2n+1}.$$
Proceeding as in the proof of \autoref{cor:oddeven}, we have  that $M$ is a non-zero antihermitian matrix. That implies that the eigenvalues of $M$ are purely imaginary and not all of them are equal to 0, that yields $\mathrm{tr}\, M^{2n+1}$ is   purely imaginary. While  the coefficient $[\prod_{i=1}^{N}x_i^2]f_G(x_1,x_2,\ldots,x_{N})$ is actually a real number. Hence the coefficient vanishes.
\end{proof}

Recall a recent result:

\begin{theorem}[\cite{kaul2018}]\label{thm:kaul}
Suppose that $G$ is a complete graph or an odd cycle with $|V (G)|\geq 3$. Suppose
$H$ is a graph on at least two vertices that contains a Hamilton path, $w_1,w_2,\ldots ,w_m$, such
that $w_i$ has at most $k$ neighbors among $w_1,\ldots,w_{i-1}$. Then, $AT(G\Box H)\leq \Delta(G)+ k$.
\end{theorem}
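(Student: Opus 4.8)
The plan is to produce, for $G=K_n$ (so $\Delta(G)=n-1$) or $G=C_{2\ell+1}$ (so $\Delta(G)=2$), an explicit orientation $D$ of $G\Box H$ whose maximum indegree is at most $\Delta(G)+k-1$ and for which the signed count $E_D-O_D=\sum_{S}(-1)^{|S|}$ of Eulerian subdigraphs is nonzero; by the orientation form of the Alon--Tarsi criterion this yields $AT(G\Box H)\le \Delta(G)+k$. The point to keep in mind is that both admissible $G$ satisfy $AT(G)=\Delta(G)+1$, realised by a \emph{rigid} acyclic AT-orientation $D_0$ (a transitive tournament for $K_n$, the path-like orientation $1\to2\to\cdots\to(2\ell+1)\leftarrow1$ for the odd cycle) possessing a single vertex of indegree $\Delta(G)$. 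If one simply orients every copy of $G$ by $D_0$ and every edge of $H$ forward along the Hamilton path $w_1,\dots,w_m$, the result is acyclic and AT, but its maximum indegree is $\Delta(G)+k$, giving only $AT(G\Box H)\le\Delta(G)+k+1$. Thus the whole content of the theorem is the saving of one unit, and this saving is impossible inside acyclic orientations: it forces directed cycles that cross between the copies of $G$.

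The construction I would use relocates each heavy vertex. Order the copies along the Hamilton path and orient the copy at $w_j$ by $D_0$ or by its reverse $\overline{D_0}$ in alternating fashion, so that consecutive copies place their unique indegree-$\Delta(G)$ vertex at different positions. Each edge $w_iw_j\in E(H)$ with $i<j$ contributes a matching $\{(g,w_i)(g,w_j):g\in V(G)\}$; I orient these forward by default, but whenever a vertex has reached internal indegree $\Delta(G)$ I route its surplus outward along one incident matching edge, onto a vertex of the neighbouring copy that still has slack (such a vertex exists precisely because the neighbour is oriented by the reverse orientation, which moves the heavy vertex elsewhere). A bookkeeping check, using $|B_j|\le k$ for $B_j=\{i<j:w_iw_j\in E(H)\}$, should show this can be arranged so every vertex ends with indegree at most $\Delta(G)+k-1$.

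To verify $E_D-O_D\neq0$ I would exploit that the few reversed (backward) matching arcs act as bottlenecks: they are the only arcs decreasing position along the Hamilton path, so every directed cycle that leaves a copy of $G$ must return through one of them, which sharply restricts how cycles combine into an Eulerian subdigraph. In the prototype $H=K_2$ (so $k=1$) there is a single backward matching arc shared by every crossing cycle, no Eulerian subdigraph contains two cycles, and $E_D-O_D=1+\sum_{C}(-1)^{|C|}$ over the directed cycles $C$. This inner sum telescopes: grouping cycles by the vertex at which they cross forward and summing the signs of the monotone paths inside each copy produces factors $\sum_t(-1)^t\binom{r}{t}=(1-1)^r$, which vanish except for the extreme term and leave $E_D-O_D=2$. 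For general $H$ the same mechanism, organised as a transfer-matrix product along the Hamilton path exactly as in \autoref{lem:trace} and evaluated through these binomial collapses, should reduce $E_D-O_D$ to an explicit nonzero integer; for $G=C_{2\ell+1}$ I would instead run the anti-Hermitian/eigenvalue argument of \autoref{cor:oddeven}, whose conclusion that the relevant matrices have purely imaginary spectrum should survive the passage from a cycle to the degenerate graph $H$.

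The hard part is the global feasibility of the degree-saving orientation together with the nonvanishing, and the two are intertwined. Routing every heavy vertex's surplus onto a slack vertex while simultaneously respecting the cap $\Delta(G)+k-1$ and keeping the backward arcs in the bottleneck pattern that controls the sign sum is a scheduling problem on $H$; I expect the alternating tournament/reverse pattern to be precisely what guarantees a consistent routing, but verifying this in the presence of up to $k$ backward matchings per copy, and proving that the resulting signed circulation count never fully cancels, is the crux. It is here that the hypothesis that $G$ is a complete graph or an odd cycle is essential: these are exactly the graphs whose AT-orientations are rigid enough --- a unique heavy vertex, together with a reverse that relocates it --- to make both the routing and the binomial collapse go through.
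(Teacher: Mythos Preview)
The paper does not prove this theorem; it is quoted verbatim from \cite{kaul2018} and used as a black box to deduce $AT(T_{2m+1,2n+1})\le 4$. There is therefore no proof in the paper to compare your attempt against.

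As a standalone argument, your proposal is not yet a proof: it is a plan with the decisive steps left as hopes. Two concrete gaps stand out. First, the orientation you describe is not actually constructed. Saying that you ``route the surplus outward along one incident matching edge onto a vertex that still has slack'' and that ``a bookkeeping check \dots\ should show this can be arranged'' is exactly the content of the theorem; you must exhibit the routing and verify the indegree bound, and the alternation $D_0/\overline{D_0}$ along a Hamilton path does not by itself guarantee that every heavy vertex finds a slack neighbour when $H$ has many back-edges.

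Second, the nonvanishing of $E_D-O_D$ is only argued for $H=K_2$, and even there the ``binomial collapse'' is asserted rather than carried out. For general $H$ you appeal to \autoref{lem:trace} and the anti-Hermitian argument of \autoref{cor:oddeven}, but those results are specific to products with a \emph{cycle} $C_k$: the trace formula comes from the cyclic identification of the first and last copy, and the sign conclusion uses that $k$ is even. Neither transfers to an arbitrary $H$ equipped only with a Hamilton path and bounded back-degree, so invoking them here is not legitimate without substantial new work.
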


Since $T_{2m+1,2n+1}=C_{2m+1}\Box C_{2n+1},$  according to \autoref{cor:oddodd} and \autoref{thm:kaul}, we get

\begin{theorem}
$AT(T_{2m+1,2n+1} )=4$, for $m,n\geq 1.$
 \end{theorem}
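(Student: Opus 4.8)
The plan is to prove the matching bounds $AT(T_{2m+1,2n+1})\ge 4$ and $AT(T_{2m+1,2n+1})\le 4$, with essentially all of the substantive work already packaged into the results above.

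First I would establish the lower bound. Note that $G=T_{2m+1,2n+1}$ is $4$-regular on $N=(2m+1)(2n+1)$ vertices, so its graph polynomial $f_G$ is homogeneous of degree $2N$. If $G$ were Alon--Tarsi $3$-choosable, then by definition $f_G$ would contain a nonvanishing monomial $c\prod_{i=1}^{N}x_i^{t_i}$ with $t_i\le 2$ for all $i$; but homogeneity gives $\sum_{i=1}^{N}t_i=\deg f_G=2N$, which together with $t_i\le 2$ forces $t_i=2$ for every $i$. That coefficient is exactly the one shown to vanish in \autoref{cor:oddodd}, a contradiction. Hence $AT(T_{2m+1,2n+1})\ge 4$.

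For the upper bound I would invoke \autoref{thm:kaul}. Write $T_{2m+1,2n+1}=G'\Box H$ with $G'=C_{2m+1}$, an odd cycle on at least $3$ vertices, and $H=C_{2n+1}$. Listing the vertices of $C_{2n+1}$ in cyclic order gives a Hamilton path $w_1,w_2,\dots,w_{2n+1}$ in which $w_1$ has no earlier neighbour, each $w_i$ with $2\le i\le 2n$ has exactly one earlier neighbour, and $w_{2n+1}$ has exactly two of them (namely $w_{2n}$ and $w_1$); so the hypothesis of \autoref{thm:kaul} is met with $k=2$. Since $\Delta(C_{2m+1})=2$, \autoref{thm:kaul} gives $AT(T_{2m+1,2n+1})\le \Delta(C_{2m+1})+k=2+2=4$.

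Combining the two inequalities yields $AT(T_{2m+1,2n+1})=4$. The genuine difficulty was already overcome before this point: it lies entirely in the vanishing of the all-$2$'s coefficient in \autoref{cor:oddodd}, proved via the antihermitian-matrix computation, and in the cited bound \autoref{thm:kaul}. What remains here is only the elementary observation that $4$-regularity forces the relevant monomial to be $\prod_{i=1}^{N}x_i^2$, together with the trivial choice of a Hamilton path in $C_{2n+1}$; so I expect no obstacle at this final stage.
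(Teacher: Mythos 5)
Your proof is correct and follows the paper's own route: the lower bound comes from \autoref{cor:oddodd} together with the homogeneity/4-regularity observation forcing the monomial $\prod_i x_i^2$, and the upper bound from \autoref{thm:kaul} applied to $C_{2m+1}\Box C_{2n+1}$ with $k=2$. You have merely spelled out the routine details that the paper leaves implicit.
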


\section{Conclusions}
 In this paper, we have showed the following results:
$$AT(C_m\Box C_n)=\left\{\begin{array}{ll}
   4, &n, m\text{~both odd},\\
   3, &\text{else}.
   \end{array}\right.$$
$$\chi_l(C_m\Box C_n)=\left\{\begin{array}{ll}
   3~or ~4, &n, m\text{~both odd},\\
   3, &\text{else}.
   \end{array}\right.$$
  When  $n, m$ are both odd, $\chi_l(C_m\Box C_n)$ is  undetermined yet. Although its Alon--Tarsi number is $4$, we conjecture that      $\chi_l(C_{n}\Box C_{m})=3$.

%

 \bibliographystyle{plain}
\bibliography{graph}

\end{document}